\documentclass[12pt]{amsart}
\makeatletter
\usepackage{amsfonts,delarray,amssymb,amsmath,amsthm,a4,a4wide}

\usepackage{color}

\title[Blow up of subcritical quantities of the mean curvature flow]{Blow up of subcritical quantities at the first singular time of the mean curvature flow}
\author{Nam Q.  Le}
\address{Department of
Mathematics, Columbia University, New York,
 USA}
\email{namle@math.columbia.edu}
\keywords{mean curvature flow, singularity time, blow up, subcritical quantities}

\pagestyle{myheadings}
\newcommand{\review}[2][\right]{\relax
\ifx#1\right\relax \left.\fi#2#1\rvert}
\let\abs=\envert
 
\newtheorem{theorem}{Theorem}[section]
\newtheorem{propo}{Proposition}[section]
\newtheorem{remark}{Remark}[section]

\newtheorem{lemma}{Lemma}[section]

\newcommand{\bef}{\begin{flushright}}
\newcommand{\eef}{\end{flushright}}

\newcommand{\eval}[2][\right]{\relax
\ifx#1\right\relax \left.\fi#2#1\rvert}

\let\abs=\envert

\numberwithin{equation}{section}
\let\norm=\enVert
\newcommand\e{\varepsilon}
\newcommand{\h}{\hspace*{.24in}}

\def\h{\hspace*{.24in}}

\def\beq{\begin{eqnarray*}}
\def\eeq{\end{eqnarray*}}

\def\RR{\mbox{$I\hspace{-.06in}R$}}

 \begin{document}
\maketitle
\author
\pagenumbering{arabic}
\begin{abstract}
Consider a family of smooth immersions $F(\cdot,t): M^n\to \mathbb{R}^{n+1}$ of closed hypersurfaces in $\mathbb{R}^{n+1}$ moving 
by the mean curvature flow $\frac{\partial F(p,t)}{\partial t} = -H(p,t)\cdot \nu(p,t)$, for $t\in [0,T)$. We show that at the first singular time of 
the mean curvature flow, certain subcritical quantities concerning 
the second fundamental form, for example $\int_{0}^{t} \int_{M_{s}} \frac{\abs{A}^{n + 2}}{ log (2 + \abs{A})} d\mu ds,$ blow up. Our result is a log improvement of recent results of Le-Sesum, Xu-Ye-Zhao where the scaling invariant 
quantities were considered. 
\end{abstract}
\noindent
\section{Introduction}
\h Let $M^{n}$ be a compact $n$-dimensional hypersurface without boundary, and let $F_{0}: M^{n}\rightarrow \RR^{n+ 1}$ be a smooth immersion of $M^{n}$ into $\mathbb{R}^{n+1}$. Consider a smooth one-parameter family of immersions
\begin{equation*}
 F(\cdot, t): M^{n}\rightarrow \RR^{n +1}
\end{equation*}
satisfying
\begin{equation*}
 F(\cdot, 0) = F_{0}(\cdot)
\end{equation*}
and 
\begin{equation}
 \frac{\partial F(p, t)}{\partial t} = -H(p, t)\nu(p,t)~\forall (p, t)\in M\times [0, T).
\label{MCF1}
\end{equation}
Here $H(p, t)$ and $\nu(p, t)$ denote the mean curvature and a choice of unit normal for the hypersurface $M_{t} = F(M^{n},t)$ at $F(p, t)$.
We will sometimes also write $x(p, t) = F(p, t)$ and refer to (\ref{MCF1}) as to the mean curvature flow equation. For any compact $n$-dimensional hypersurface $M^{n}$ which is smoothly embedded in $\RR^{n+1}$ by $F: M^{n}\rightarrow \RR^{n+1}$, 
let us denote by $g = (g_{ij})$ the induced metric, $A = (h_{ij})$ the second fundamental form,  $d\mu =\sqrt{\text{det}~(g_{ij})}~dx$ the volume form, 
$\nabla$ the induced Levi-Civita connection
and $\Delta$ the induced Laplacian.
Then the mean curvature of $M^{n}$ is given by $
 H = g^{ij}h_{ij}. $
We will use the following notation throughout the whole paper,
$$||v||_{L^{p, q}(M\times[0,T))} := (\int_0^T\left(\int_{M_t}|v|^p\, d\mu\right)^{\frac{q}{p}} dt)^{\frac{1}{q}},$$
for a function $v(\cdot,t)$ defined on $M\times [0,T)$.\\
\h Without any special assumptions on $M_0$, the mean curvature flow (\ref{MCF1}) will in general develop singularities  in finite time, characterized by a blow up of the second fundamental form $A(\cdot,t)$.

\begin{theorem}[Huisken \cite{Huisken84}]
Suppose $T < \infty$ is the first singularity time  for a compact mean curvature flow. Then $\sup_{M_t}|A|(\cdot,t) \to \infty$ as $t\to T$.
\label{Aunbound}
\end{theorem}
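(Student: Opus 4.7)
I would argue by contradiction: suppose there is a constant $C_0$ with $\sup_{M_t}|A|(\cdot,t)\le C_0$ for all $t\in[0,T)$, and derive that the flow extends smoothly past $T$.

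\textbf{Step 1 (metric equivalence).} From the well-known evolution of the induced metric under \eqref{MCF1},
\[
\frac{\partial g_{ij}}{\partial t}=-2H h_{ij},
\]
and the bound $|A|\le C_0$ (which controls $H$ and $h_{ij}$), I would deduce
\[
e^{-2nC_0^{2}T}g_{ij}(0)\le g_{ij}(t)\le e^{2nC_0^{2}T}g_{ij}(0)
\]
for every $t\in[0,T)$. In particular all the metrics are uniformly equivalent, so the volume form, the norms of tensors, and distances on $M$ stay comparable to their $t=0$ counterparts.

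\textbf{Step 2 ($C^{0}$ convergence of the immersion).} Since $|\partial_{t}F|=|H|\le\sqrt{n}\,C_{0}$, for any sequence $t_{k}\uparrow T$ the maps $F(\cdot,t_{k})$ form a Cauchy family in $C^{0}$, so they converge uniformly to a continuous map $F(\cdot,T):M^{n}\to\mathbb{R}^{n+1}$. Using Step 1, $F(\cdot,T)$ is at least a Lipschitz immersion with nondegenerate limit metric $g(T)$.

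\textbf{Step 3 (higher-order estimates).} This is the main technical point. I would invoke the standard Bernstein--Shi-type evolution inequalities
\[
\frac{\partial}{\partial t}|\nabla^{m}A|^{2}\le\Delta|\nabla^{m}A|^{2}-2|\nabla^{m+1}A|^{2}+C_{m}\sum_{i+j+k=m}|\nabla^{i}A|\,|\nabla^{j}A|\,|\nabla^{k}A|\,|\nabla^{m}A|,
\]
derived by commuting $\nabla^{m}$ with the evolution of $A$. Starting from $|A|\le C_{0}$ (the $m=0$ bound), I would set up an induction: assuming $|\nabla^{j}A|\le C_{j}$ for $j<m$, I would test the $m$-th inequality against a suitable cutoff in time of the form $t^{m}|\nabla^{m}A|^{2}$ combined with $|\nabla^{m-1}A|^{2}$ (the Shi trick) to absorb the bad top-order term via the good gradient term. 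Iterating gives $\sup_{M\times[0,T)}|\nabla^{m}A|\le C_{m}$ for every $m$. The delicate part is organizing the induction so that the cross-terms on the right-hand side are all controlled by previously established bounds; this is where I expect the main obstacle.

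\textbf{Step 4 (smooth extension and contradiction).} Combining Steps 1--3 with the uniform equivalence of the metrics, standard arguments show that $F(\cdot,t)\to F(\cdot,T)$ in $C^{\infty}(M)$ as $t\to T$, and the limit is a smooth immersion of $M^{n}$ into $\mathbb{R}^{n+1}$. Then the short-time existence theorem for the mean curvature flow applied with $F(\cdot,T)$ as initial datum produces a smooth solution on $[T,T+\delta)$, which splices with the original flow to give a smooth extension on $[0,T+\delta)$. This contradicts the assumption that $T$ is the \emph{first} singular time, proving that $\sup_{M_{t}}|A|(\cdot,t)\to\infty$ as $t\to T$.
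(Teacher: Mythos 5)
The paper does not give a proof of this theorem; it is quoted as a black box from Huisken \cite{Huisken84}, so there is no in-paper argument to compare against. Your plan correctly reproduces the standard extension argument from Huisken's paper: uniform metric equivalence from the $|A|$-bound, $C^{0}$ convergence of $F$ from the $|H|$-bound, Bernstein/Shi-type higher-order bounds on $|\nabla^{m}A|$ by the maximum principle, smooth convergence, and short-time existence to extend past $T$. Two small organizational remarks. First, the assertion at the end of Step~2 that $F(\cdot,T)$ is a Lipschitz \emph{immersion} already presupposes control of $dF$, which you obtain only in Step~3 (the $m=1$ case); it is cleaner to state the $C^{0}$ limit first and upgrade to an immersion after the derivative bounds are in hand. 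Second, since $M$ is compact and $F_{0}$ is smooth, $|\nabla^{m}A|(\cdot,0)$ is already finite for every $m$, so the $t^{m}$ time weight is unnecessary here: the classical Huisken quantity $|\nabla^{m}A|^{2}+C_{m}|\nabla^{m-1}A|^{2}$, with $C_{m}$ chosen so that the good term $-2|\nabla^{m}A|^{2}$ coming from the $|\nabla^{m-1}A|^{2}$-evolution absorbs the top-order term in the $|\nabla^{m}A|^{2}$-evolution, is all that is needed for the induction. Neither point is a gap; the plan is sound.
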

In Le-Sesum \cite{LS, LS2}, Xu-Ye-Zhao \cite{XYZ1}, it was proved that at the first singularity time of the mean curvature flow, certain 
scaling invariant quantities blow-up. Specifically,

\begin{theorem}
 Suppose $T < \infty$ is the first singularity time  for a compact mean curvature flow. Let $p$ and $q$ be positive numbers satisfying
$\frac{n}{p} + \frac{2}{q} =1.$ Then $\norm{A}_{L^{p, q}(M\times [0, t))}\rightarrow \infty$ as $t\to T$. In particular, for $p=q = n + 2$, one has
$ \int_{0}^{t}\int_{M_{s}}\abs{A}^{n +2} d\mu ds\to
\infty$ as $t\to T.$
\label{scalinginv}
\end{theorem}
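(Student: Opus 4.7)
The plan is to argue by contradiction: suppose $\|A\|_{L^{p,q}(M\times[0,T))} \leq K < \infty$, and derive a uniform pointwise bound on $|A|$, which contradicts Theorem \ref{Aunbound}. The mechanism will be a Moser/De Giorgi iteration driven by the evolution equation of $|A|^2$ together with the Michael--Simon Sobolev inequality on $M_t$, with the scaling-invariant hypothesis entering at the interpolation step.

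First I would record Simons' identity, which gives
$$(\partial_t - \Delta)|A|^2 \leq -2|\nabla A|^2 + 2|A|^4.$$
Multiplying by $|A|^{2(k-1)}$, integrating over $M_t$ (using $\tfrac{d}{dt}d\mu = -H^2\,d\mu$), and invoking Kato's inequality $|\nabla |A||^2 \leq |\nabla A|^2$, I obtain an energy inequality of the form
$$\frac{d}{dt}\int_{M_t}|A|^{2k}\,d\mu + \frac{c}{k}\int_{M_t}\bigl|\nabla |A|^k\bigr|^2\,d\mu \leq Ck \int_{M_t}|A|^{2k+2}\,d\mu.$$
Applying the Michael--Simon Sobolev inequality to $v = |A|^k$, and noting $|H|\leq\sqrt{n}|A|$ to absorb the mean-curvature weight into the right-hand side, yields
$$\Bigl(\int_{M_t}|A|^{\frac{2kn}{n-2}}\,d\mu\Bigr)^{\frac{n-2}{n}} \leq C\int_{M_t}\bigl(\bigl|\nabla |A|^k\bigr|^2 + |A|^{2k+2}\bigr)\,d\mu.$$

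The crucial step is then a H\"older interpolation of $\int_{M_t}|A|^{2k+2}$ between the critical Sobolev exponent $\tfrac{2kn}{n-2}$ in space and the scaling-invariant norm $\|A\|_{L^{p,q}}$ in time. The algebraic identity $n/p+2/q=1$ is exactly what makes the exponents pair up so that $|A|^{2k+2}$ is covered with a spacetime exponent compatible with the energy--Sobolev pair. After integration in time this produces, for some $\gamma = \gamma(n,p,q)$,
$$\int_0^t\!\!\int_{M_s}|A|^{2k+2}\,d\mu\,ds \leq \varepsilon \int_0^t\!\!\int_{M_s}\bigl|\nabla |A|^k\bigr|^2\,d\mu\,ds + C(\varepsilon)K^{\gamma}\sup_{s<t}\int_{M_s}|A|^{2k}\,d\mu.$$
Absorbing the gradient term into the left-hand side of the energy estimate and applying Gr\"onwall yields a bound on $\sup_{s<T}\int_{M_s}|A|^{2k}$ for every finite $k$. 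Running $k$ through a geometric sequence (Moser iteration) and tracking constants, this upgrades to a pointwise bound $\sup_{M\times[0,T)}|A|<\infty$, contradicting Theorem \ref{Aunbound}.

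The principal obstacle will be the bookkeeping in the interpolation: one must verify that the critical condition $n/p+2/q=1$ is precisely what closes the inequality, and that the iteration constants do not degenerate as $k\to\infty$. A secondary nuisance is the mean-curvature weight in Michael--Simon, which one handles by the estimate $|H|\leq\sqrt{n}|A|$ so that it folds back into the same $|A|^{2k+2}$ term already present on the right; one should however check that no scaling mismatch arises when the geometry is allowed to degenerate as $t\to T$.
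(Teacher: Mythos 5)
Note first that the paper does not prove Theorem~\ref{scalinginv}: it is quoted from Le--Sesum \cite{LS,LS2} and Xu--Ye--Zhao \cite{XYZ1}. The paper explicitly attributes to \cite{LS} a proof by ``a blow-up argument and Moser iteration,'' and your proposal follows that Moser-iteration strategy. Your energy inequality from Simons' identity, your Sobolev step (the form $\bigl(\int v^{2n/(n-2)}\bigr)^{(n-2)/n}\le C\bigl(\int|\nabla v|^2+\int H^2v^2\bigr)$ is indeed obtainable from Michael--Simon by Cauchy--Schwarz), and your interpolation at the scaling-critical exponents $n/p+2/q=1$ are all the right ingredients and are consistent with the machinery the present paper develops in Section~\ref{Sobolev} and Section~\ref{sec-Harnack}.

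However, as written, your Gr\"onwall--Moser scheme does not close; this is the gap, and it is precisely the ``scaling mismatch'' you flag at the end. Your interpolated estimate reads $\int_0^t\!\!\int|A|^{2k+2}\le\varepsilon\int_0^t\!\!\int|\nabla|A|^k|^2+C(\varepsilon)K^{\gamma}\sup_s\int|A|^{2k}$, but the energy inequality carries a factor $c/k$ in front of the gradient term, so to absorb you must take $\varepsilon\lesssim 1/k^2$, which makes $C(\varepsilon)$ grow polynomially in $k$. Absorbing the $\sup_s\int|A|^{2k}$ term on the right then forces $K=\|A\|_{L^{p,q}}$ (on the time interval considered) to be small by an amount that \emph{depends on $k$}; shrinking the time window $[\tau_k,T)$ to achieve this makes the ``initial'' data $\int_{M_{\tau_k}}|A|^{2k}$ itself uncontrolled, and the iteration constants are no longer of the geometric form Moser iteration needs. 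This is exactly why \cite{LS} pairs Moser iteration with a blow-up/rescaling argument, and why the present paper, for its stronger Theorem~\ref{Abound}, proves a normalized small-norm statement (Proposition~\ref{smallnorm}, with the localized Harnack Lemma~\ref{Harnackineq}) and then rescales in Proposition~\ref{keybound} so that the smallness is a fixed universal constant $c_0$, independent of $k$. To repair your proof you should either carry out the iteration in the localized Harnack form with space-time cutoffs, so that the small-norm requirement enters once at a fixed scale, or insert the rescaling $\tilde F(x,t)=QF(x,t/Q^2)$ to normalize the critical norm before iterating.
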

The proof in \cite{LS2, XYZ1} used a blow-up argument combined with a compactness property of the mean curvature flow \cite{ChenHe}. The proof
in \cite{LS} used a blow-up argument and Moser iteration. \\  
\h In this paper, we give a logarithmic improvement of the above results by showing that a family of subcritical quantities concerning 
the second fundamental form  blows up at the first singular time of 
the mean curvature flow. Our proof covers a large class of such subcritical quantities including $\int_{0}^{t} \int_{M_{s}} \frac{\abs{A}^{n + 2}}{ log (1 + \abs{A})} 
d\mu ds$. For clarity, we will focus on this quantity. Equivalently, we prove the following
\begin{theorem}
Assume that for the mean curvature flow (\ref{MCF1}), we have
\begin{equation}
 \int_{0}^{T} \int_{M_{t}} \frac{\abs{A}^{n + 2}}{ log (1 + \abs{A})} d\mu dt<\infty.
\end{equation}
\h Then the flow can be extended past time $T$.
\label{Abound}
\end{theorem}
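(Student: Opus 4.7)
The plan is to argue by contradiction via a parabolic blow-up, extending the scheme used by Le--Sesum and Xu--Ye--Zhao for Theorem~\ref{scalinginv}. Assume the subcritical integral in the statement is finite but the flow develops a singularity at $T$. Theorem~\ref{Aunbound} supplies a sequence $(p_k, t_k) \in M \times [0, T)$ with $t_k \to T$ and $Q_k := |A|(p_k, t_k) = \max_{M \times [0, t_k]} |A| \to \infty$. Rescale parabolically by $Q_k$: set
\[
F_k(\cdot, s) := Q_k\bigl(F(\cdot, t_k + s/Q_k^2) - F(p_k, t_k)\bigr), \qquad s \in [-Q_k^2 t_k, 0],
\]
so that $|A^k| \le 1$ on the whole rescaled time interval and $|A^k|(p_k, 0) = 1$. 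By the Chen--He compactness theorem a subsequence converges in $C^{\infty}_{\mathrm{loc}}$ to a complete, ancient, non-flat mean curvature flow $\tilde F$ on $\tilde M \times (-\infty, 0]$ with $|\tilde A|(\tilde p_\infty, 0) = 1$, where $\tilde p_\infty$ is the limit of the $p_k$ under the identifications provided by the convergence.

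Next I transfer the subcritical bound through the rescaling. A direct computation using $|A^k| = Q_k^{-1}|A|$, $d\mu^k = Q_k^n\, d\mu$ and $ds = Q_k^2\, dt$ gives, for every compact $\Omega \subset \tilde M \times (-\infty, 0]$,
\[
\int_\Omega \frac{|A^k|^{n+2}}{\log(1 + Q_k\, |A^k|)}\, d\mu^k\, ds \;=\; \int_{\Omega_k^{\mathrm{orig}}} \frac{|A|^{n+2}}{\log(1 + |A|)}\, d\mu\, dt \;=:\; \epsilon_k,
\]
where $\Omega_k^{\mathrm{orig}}$ denotes the preimage of $\Omega$ under the rescaling, a parabolic cylinder in the original spacetime of measure at most of order $Q_k^{-(n+2)}$ collapsing onto $\{t = T\}$. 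Absolute continuity of the assumed finite integral forces $\epsilon_k \to 0$. Combined with the bound $|A^k| \le 1$, which gives $\log(1 + Q_k|A^k|) \le \log(1+Q_k)$, this yields
\[
\int_\Omega |A^k|^{n+2}\, d\mu^k\, ds \;\le\; \log(1 + Q_k)\, \epsilon_k.
\]
If this upper bound tends to zero, smooth convergence $A^k \to \tilde A$ on $\Omega$ forces $\int_\Omega |\tilde A|^{n+2}\, d\tilde \mu\, ds = 0$ for every such $\Omega$, which contradicts $|\tilde A|(\tilde p_\infty, 0) = 1$ by continuity and completes the proof.

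The principal obstacle is thus proving $\epsilon_k\, \log Q_k \to 0$; this is precisely the logarithmic improvement over the scale-invariant case, and it does not follow from $\epsilon_k \to 0$ alone. For instance, if the temporal density $t \mapsto \int_{M_t} \frac{|A|^{n+2}}{\log(1+|A|)}\, d\mu$ concentrated like $(T-t)^{-1} \log^{-2}(1/(T-t))$, the naive estimate would only give $\epsilon_k \log Q_k = O(1)$. To beat this loss, I plan to refine the point selection via a Hamilton--Perelman-type point-picking lemma: choose $(p_k, t_k, r_k)$ with $r_k \ge 1/Q_k$ so that a normalized local version of the subcritical integral on the parabolic cylinder of radius $r_k$ around $(p_k, t_k)$ attains its supremum along a suitable exhaustion; maximality at such a point should force the localized subcritical mass to decay strictly faster than $1/\log Q_k$ along the subsequence, while the rescaled flow at scale $r_k$ still converges to a non-flat limit. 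A parallel route, closer to the Moser iteration argument of Le--Sesum, is to insert the factor $1/\log(1+|A|)$ into the test functions and absorb the weight via a logarithmically corrected Michael--Simon--Sobolev inequality; this would replace the critical $L^{n+2}$ smallness hypothesis by its subcritical counterpart. Either route reduces the task to extracting an $o(1/\log Q_k)$ decay rate for $\epsilon_k$ from the finiteness of the subcritical integral, which is the technical heart of the argument.
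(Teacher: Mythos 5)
Your blow-up scheme runs into exactly the obstruction you flag and does not resolve it: after rescaling by $Q_k$, finiteness of the subcritical integral yields only $\epsilon_k \to 0$, whereas ruling out the non-flat ancient limit requires the strictly stronger $\epsilon_k \log Q_k \to 0$, which is not a consequence of the hypothesis. The two fixes you sketch are not carried out, and neither is a minor technicality. A Hamilton--Perelman-type point-picking on the localized subcritical mass faces the difficulty that maximality over scales does not by itself force decay faster than $1/\log Q_k$; and a ``logarithmically corrected Michael--Simon/Moser iteration'' is really a different proof that you have not written. As it stands the argument has a genuine hole at precisely the step that constitutes the logarithmic improvement.

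The paper avoids the blow-up dichotomy altogether: the log gain is captured by a Gronwall/Osgood mechanism, not by extracting a decay rate along a rescaling sequence. Proposition~\ref{keybound} (proved via the Sobolev/reverse-H\"older/Harnack machinery of Sections~\ref{Sobolev}--\ref{boundSF} together with the rescaling trick of Proposition~\ref{smallnorm}) gives the \emph{affine, supercritical} bound
\[
\sup_{M_t}|A| \;\leq\; c\Bigl(1 + \int_0^t\!\!\int_{M_s}|A|^{n+3}\,d\mu\,ds\Bigr),\qquad t\geq\tau_1.
\]
Writing $|A|^{n+3} = \Psi(|A|)\cdot\frac{|A|^{n+2}}{\log(2+|A|)}$ with $\Psi(s)=s\log(2+s)$, and setting $f(t)=\sup_{M_t}|A|$, $G(t)=\int_{M_t}\frac{|A|^{n+2}}{\log(2+|A|)}\,d\mu$, this becomes
\[
f(t)\leq c\Bigl(1+\int_0^t\Psi(f(s))\,G(s)\,ds\Bigr),
\]
and since $\int^\infty ds/\Psi(s)=\infty$ (the Osgood condition) while $\int_0^T G(s)\,ds<\infty$ by hypothesis, the integral inequality keeps $f$ bounded up to $T$. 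The decisive structural feature is that the Moser-iteration bound is \emph{linear} in the supercritical quantity $\int|A|^{n+3}$, so one can peel off exactly one factor $|A|\log(2+|A|)$ and feed it into an Osgood nonlinearity. Your compactness approach has no analogue of this step, which is why the $\log Q_k$ loss reappears with no mechanism to absorb it; I would recommend abandoning the blow-up route and reworking the argument along the Moser-plus-Gronwall lines above.
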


Our result is inspired by a recent log improvement of the Prodi-Serrin criteria for Navier-Stokes equations by Chan-Vasseur \cite{ChV}. The usual 
Prodi-Serrin criterion ensures global regularity of a weak Leray-Hopf solution $u$ of the Navier-Stokes equation in dimension 3 provided
that $\abs{u}^5$ is integrable in space time variables. Chan-Vasseur's result shows that the global regularity holds under the condition
that $\abs{u}^5/log (1 +\abs{u})$ is integrable in space time variables.\\
\h Note that, however, the techniques used in \cite{ChV} and in the present article are different. In \cite{ChV}, the authors used De Giorgi's 
technique while in our paper, we use Moser iteration.
\begin{remark}
 To our knowledge, Theorem \ref{Abound} is the first result in geometric evolutions where the finiteness of a slightly subcritical quantity implies
global existence. It would be interesting to obtain similar results in other settings such as the heat flow of harmonic maps or the Ricci flow. 
\end{remark}

\h Let us comment briefly on ideas of the proof of Theorem \ref{Abound}. 
The key point in the proof of Theorem \ref{Abound} is that for any time $t>0$, the second fundamental form $A(\cdot, t)$ can be bounded in an affine
way by $
 \int_{0}^{t}\int_{M_{s}} \abs{A}^{n +3} d\mu ds. $
More precisely, we have the following
\begin{propo}
 For all $\lambda \in (0, 1]$ there is a constant $c_{\lambda}$ such that for all $T\geq\lambda$
\begin{equation}
 \sup_{x\in M_{T}} \abs{A(x,T)} \leq c_{\lambda} (1 + \int_{0}^{T}\int_{M_{t}} \abs{A}^{n +3} d\mu dt).
\end{equation}
\label{keybound}
\end{propo}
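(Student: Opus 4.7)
My plan is to establish Proposition~\ref{keybound} via a parabolic Moser iteration applied to the evolution equation of $\abs{A}^2$, exploiting that the exponent $n+3$ strictly exceeds the scaling-critical exponent $n+2$. The starting point is Hamilton's pointwise identity
$$
(\partial_t - \Delta)\abs{A}^2 = -2\abs{\nabla A}^2 + 2\abs{A}^4,
$$
coupled with $\partial_t(d\mu) = -H^2\,d\mu$. Testing by $q\abs{A}^{2q-2}$ for $q\geq 1$, integrating by parts on $M_t$, and using Kato's inequality $\abs{\nabla\abs{A}}\leq\abs{\nabla A}$, I obtain the energy inequality
$$
\tfrac{d}{dt}\!\int_{M_t}\!\abs{A}^{2q}d\mu + c(q)\!\int_{M_t}\!\abs{\nabla\abs{A}^q}^2 d\mu + \!\int_{M_t}\!H^2\abs{A}^{2q}d\mu \leq Cq\!\int_{M_t}\!\abs{A}^{2q+2}d\mu.
$$
Introducing a time cutoff $\psi(t)$ with $\psi(0)=0$, $\psi=1$ on $[\tau,T]$, and $\abs{\psi'}\lesssim\tau^{-1}$, applying the Michael--Simon Sobolev inequality to $u=\abs{A}^q$ (whose $H^2u^2$ contribution is absorbed by the $H^2\abs{A}^{2q}$ on the left), and combining with the standard parabolic embedding $L^\infty_tL^2_x\cap L^2_tH^1_x\hookrightarrow L^{2(n+2)/n}_{t,x}$ yields a reverse-H\"older estimate
$$
\Bigl(\iint_{M\times[\tau,T]}\abs{A}^{2q\gamma}\,d\mu\,dt\Bigr)^{1/\gamma} \leq \frac{Cq}{\tau}\iint_{M\times[0,T]}\bigl(1+\abs{A}^{2q+2}\bigr)\,d\mu\,dt,
$$
with $\gamma = (n+2)/n$.

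For the iteration, I set $p_0 = n+3$ and $p_{k+1} = \gamma(p_k-2)$; a short induction gives $p_k - 2 = \gamma^k + n$, so $p_k$ grows geometrically. Running the reverse-H\"older estimate at level $k$ (with $2q_k+2 = p_k$) against a nested sequence of time cutoffs $\tau_k\downarrow\lambda/2$ satisfying $\tau_k-\tau_{k+1}\sim 2^{-k}\lambda$, and taking $p_{k+1}$-th roots, produces a Moser recursion
$$
\norm{A}_{L^{p_{k+1}}(M\times[\tau_{k+1},T])} \leq D_k\bigl(1+\norm{A}_{L^{p_k}(M\times[\tau_k,T])}\bigr)^{1+1/q_k},
$$
with $D_k = (Cq_k\cdot 2^k/\lambda)^{1/(p_k-2)}$. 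Since $\sum_k 1/(p_k-2)$ converges, both $\prod_k D_k$ and $\beta:=\prod_k(1+1/q_k)$ are finite constants depending only on $n$ and $\lambda$; in the limit $k\to\infty$ I obtain $\sup_{M_T}\abs{A}\leq c_\lambda(1+\norm{A}_{L^{n+3}})^\beta$. An elementary manipulation---using that $\beta$ is a fixed constant depending only on $n$, and that $x^\theta\leq 1+x$ for $x\geq 0$ and $0<\theta\leq 1$ after raising a suitable integer power of $\sup\abs{A}$ on the left---then converts this into the desired affine bound $\sup_{M_T}\abs{A}\leq c_\lambda(1+I)$ with $I=\iint\abs{A}^{n+3}\,d\mu\,dt$.

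The principal technical obstacle will be keeping all Moser constants under control across infinitely many iterations with shrinking time cutoffs; this reduces to the summability of $\sum_k 1/q_k$, which is exactly the payoff of the slight supercriticality of $n+3$ over $n+2$. A subsidiary point is that the Michael--Simon constant is universal (depending only on $n$), so that $c_\lambda$ depends only on $n$, $\lambda$, and the initial geometric data through $\abs{M_0}$, which enters via the spacetime volume term $\iint 1\,d\mu\,dt$.
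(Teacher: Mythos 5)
Your strategy (a direct global-in-space Moser iteration starting at $L^{n+3}$, with the exponent tracked through the iteration) is genuinely different from the paper's, which first proves a \emph{small-data} localized Harnack estimate (Proposition~\ref{smallnorm}, via the ball-localized Lemma~\ref{Harnackineq}) and then obtains the affine bound by a rescaling dichotomy: if $\int_0^T\int_{M_t}|A|^{n+3}\,d\mu\,dt\le c_0$ one concludes $\sup|A|\le 1$ directly, and otherwise one parabolically rescales by $Q=\tfrac1{c_0}\int_0^T\int_{M_t}|A|^{n+3}$ so that the rescaled flow falls into the small-data case, whereupon unscaling produces exactly the linear factor $Q$. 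That scaling step is what gives the affine bound for free, with no need to compute any accumulated Moser exponent.

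Your route can be made to work, but as written it has a genuine gap at the very end. After the iteration you obtain $\sup_{M_T}|A|\le c_\lambda\bigl(1+\|A\|_{L^{n+3}}\bigr)^{\beta}$ with $\beta=\prod_{k\ge 0}(1+1/q_k)$, and you assert that ``an elementary manipulation'' using $x^\theta\le 1+x$ converts this to $\sup|A|\le c_\lambda(1+I)$ with $I=\iint|A|^{n+3}$. That conversion is only valid if $\beta\le n+3$; if $\beta>n+3$ the bound $(1+\|A\|_{L^{n+3}})^\beta$ is strictly weaker than $1+I$ and no algebraic manipulation repairs it. You never verify $\beta\le n+3$, and the phrase ``after raising a suitable integer power of $\sup|A|$ on the left'' is not a coherent step. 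In fact the exponent \emph{does} work out, but it requires a computation: with $q_k=(\gamma^k+n)/2$ and $\gamma=(n+2)/n$ one has $1+1/q_k=\frac{\gamma^k+n\gamma}{\gamma^k+n}=\gamma\,\frac{\gamma^{k-1}+n}{\gamma^k+n}$, so the partial products telescope to $\gamma^{K+1}\frac{\gamma^{-1}+n}{\gamma^K+n}\to\gamma(\gamma^{-1}+n)=n+3$. Without exhibiting this identity (or some a priori scaling argument that forces the exponent), the final step is unjustified. A secondary weakness: because your reverse-H\"older carries a $+1$ (coming from $|A|^{2q}\le 1+|A|^{2q+2}$) and no spatial cutoff, the constant you produce also depends on the spacetime volume $\iint_{M\times[0,T]}1\,d\mu\,dt\le T\,\mathrm{vol}(M_0)$, hence on $T$ and $M_0$; the paper's $c_\lambda=\lambda^{-1/2}(1+1/c_0)$ depends only on $n$ and $\lambda$ because the Moser iteration there is run inside ambient balls $B(x_0,1)$. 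This loss is not fatal for deducing Theorem~\ref{Abound} (where $T$ and $M_0$ are fixed), but it makes the proposition itself weaker than stated.
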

Theorem \ref{Abound} then follows from a Gronwall-type argument on $\sup_{x\in M_{t}} \abs{A}(x,t)$.\\
\h The rest of the paper is organized as follows. In Section \ref{Sobolev}, we establish Sobolev inequalities for the mean curvature flow. 
We will use these inequalities to prove reverse Holder and Harnack inequalities in Section \ref{sec-Harnack}. In Section \ref{boundSF}, we prove Proposition
\ref{keybound}. The proof of Theorem \ref{Abound} will be carried out in the final section, Section \ref{sec-proof}, of the paper.

\section{Sobolev Inequalities for the Mean Curvature Flow}
\label{Sobolev}
In this section, we establish a version of Michael-Simon inequality, Lemma \ref{MSlem}, that allows us to derive a 
Sobolev type inequality, Proposition \ref{MStime}, for the mean curvature flow. 
This Sobolev inequality will be crucial for the reverse Holder and Harnack inequalities in the next section. \\
\h The following lemma consists of a slightly modified Michael-Simon inequality whose proof is based on the original Michael-Simon inequality
\cite{MS} together with the interpolation inequalities. By their inequality
there is a uniform constant $c_{n}$, depending only on $n$, such that for any nonnegative, $C^1$ function 
$f$ on a hypersurface $M\subset \mathbb{R}^{n+1}$, the
following  holds
\begin{equation}
\label{eqn-simon}
(\int_M f^{\frac{n}{n-1}}\,d\mu)^{\frac{n-1}{n}} \le c_{n}\int_{M}(|\nabla f|+ |H|f)\, d\mu.
\end{equation}
\begin{lemma}
Let $M$ be a compact $n$-dimensional hypersurface without boundary, which is smoothly embedded in $\RR^{n + 1}$. Let 
\begin{equation} 
Q = \left\{ \begin{aligned} 
\frac{n}{n-2} &\h \text{if}~ n>2\\
<\infty &\h \text{if} ~ n=2                                                                         
\end{aligned}
\right.
\end{equation}
Then, for all Lipschitz functions $v$ on $M$, we have
\begin{equation*}
 \norm{v}^2_{L^{2Q}(M)}\leq c_{n}\left(\norm{\nabla v}^2_{L^{2}(M)} + \norm{H}^{\frac{2(n + 3)}{3}}_{L^{n + 3}(M)}\norm{v}^2_{L^{2}(M)}\right)
\end{equation*}
where $H$ is the mean curvature of $M$ and $c_{n}$ is a positive constant depending only on $n$. 
\label{MSlem}
\end{lemma}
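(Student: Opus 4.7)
The strategy is to apply the Michael--Simon inequality \eqref{eqn-simon} to the test function $f = v^{Q+1}$ and then reorganise the right--hand side by a chain of Hölder, interpolation, and Young inequalities tuned to produce the target weight $\|H\|_{L^{n+3}(M)}^{2(n+3)/3}$. Here one uses $Q = n/(n-2)$ for $n > 2$; for $n = 2$ the same scheme works with $Q$ treated as a free finite parameter. The key algebraic observation is that $(Q+1)\cdot n/(n-1) = 2Q = 2n/(n-2)$, so that $f^{n/(n-1)} = v^{2Q}$ and the left--hand side of \eqref{eqn-simon} is exactly $\|v\|_{L^{2Q}(M)}^{Q+1}$.

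Plugging $f = v^{Q+1}$ into \eqref{eqn-simon} produces
\[
\|v\|_{L^{2Q}(M)}^{Q+1} \;\leq\; c_{n}(Q+1)\!\int_{M}\! v^{Q}|\nabla v|\,d\mu \;+\; c_{n}\!\int_{M}\! |H|\,v^{Q+1}\,d\mu.
\]
Cauchy--Schwarz bounds the gradient integral by $\|v\|_{L^{2Q}(M)}^{Q}\,\|\nabla v\|_{L^{2}(M)}$. For the mean--curvature integral I would split $v^{Q+1} = v\cdot v^{Q}$ and apply three--factor Hölder with exponents $(n+3,\,p,\,2)$, where the middle exponent is forced by $\tfrac{1}{n+3}+\tfrac{1}{p}+\tfrac{1}{2}=1$ to be $p = 2(n+3)/(n+1)$. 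The middle factor $\|v\|_{L^{p}(M)}$ is then interpolated between $L^{2}(M)$ and $L^{2Q}(M)$: solving $\tfrac{1}{p} = \tfrac{\theta}{2} + \tfrac{1-\theta}{2Q}$ yields the magic value $\theta = 3/(n+3)$, and so
\[
\int_{M} |H|\,v^{Q+1}\,d\mu \;\leq\; \|H\|_{L^{n+3}(M)}\,\|v\|_{L^{2}(M)}^{3/(n+3)}\,\|v\|_{L^{2Q}(M)}^{Q+n/(n+3)}.
\]

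Combining the two estimates, dividing by $\|v\|_{L^{2Q}(M)}^{Q}$, and applying Young's inequality with conjugate exponents $\bigl(\tfrac{n+3}{n},\tfrac{n+3}{3}\bigr)$ to the residual product $\|H\|_{L^{n+3}(M)}\|v\|_{L^{2}(M)}^{3/(n+3)}\cdot\|v\|_{L^{2Q}(M)}^{n/(n+3)}$ absorbs a small multiple of $\|v\|_{L^{2Q}(M)}$ back to the left and leaves $C\,\|H\|_{L^{n+3}(M)}^{(n+3)/3}\|v\|_{L^{2}(M)}$ on the right. Squaring the resulting linear inequality gives the claimed quadratic estimate with the correct exponent $2(n+3)/3$ on $\|H\|_{L^{n+3}(M)}$. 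The only real obstacle is the bookkeeping of exponents: the Hölder split, the interpolation parameter $\theta$, and the Young exponents must be chosen so that three conditions hold simultaneously --- the residual power of $\|v\|_{L^{2Q}(M)}$ on the right is strictly less than $1$ (so it can be absorbed), the final exponent of $\|H\|_{L^{n+3}(M)}$ equals $2(n+3)/3$ after squaring, and the final exponent of $\|v\|_{L^{2}(M)}$ equals $2$. The choices $p = 2(n+3)/(n+1)$ and $\theta = 3/(n+3)$ are precisely what makes all three fit, after which the remainder of the argument is routine.
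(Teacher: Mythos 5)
Your proposal is correct and follows essentially the same route as the paper: apply Michael--Simon to the same test function $f=v^{Q+1}=v^{2(n-1)/(n-2)}$, use H\"older with $H\in L^{n+3}$, interpolate between $L^{2}$ and $L^{2Q}$ (your $\theta=3/(n+3)$ corresponds exactly to the paper's $\alpha$-interpolation of the $L^{2m}$ norm), and then use Young's inequality to absorb a fractional power of $\norm{v}_{L^{2Q}}$. The only difference is bookkeeping: you divide through by $\norm{v}_{L^{2Q}}^{Q}$ and work with a linear inequality in $\norm{v}_{L^{2Q}}$ before squaring at the end, and you use a three-factor H\"older directly on $\int|H|v\cdot v^{Q}$, whereas the paper first raises both sides to the power $(n-2)/(n-1)$, works on the squared norms throughout, and does a two-factor H\"older followed by a separate interpolation of $\norm{v}_{L^{2m}}$. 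Both organizations land on the same exponent $2(n+3)/3$ on $\norm{H}_{L^{n+3}}$; yours is arguably a cleaner bookkeeping of the same steps. One small point you should make explicit (as the paper does) is that one may assume $v\geq 0$ by replacing $v$ with $|v|$, since Michael--Simon requires a nonnegative test function.
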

\begin{remark}
 The exponent $\frac{2}{3}~(<1)$ appearing in $\norm{H}^{n + 3}_{L^{n + 3}(M)}$ in the above inequality plays a crucial role in our paper. It allows us to 
bound $C_{1}$ in terms of $C_{0}$ (defined in (\ref{Czero})). See (\ref{CCbound}).
\end{remark}

\begin{proof} The proof of this lemma is very similar to that of Lemma 3.1 in \cite{LS}. For reader's convenience, we include the proof.
We only need to prove the lemma for $v\geq 0$. Applying Michael-Simon's inequality (\ref{eqn-simon})\cite{MS} to the function $w = v^{\frac{2(n-1)}{n-2}}$, 
we get
\begin{equation*}
 \left(\int_{M}v^{\frac{2n}{n-2}} d\mu\right)^{\frac{n-1}{n}}\leq c_{n}\left(\int_{M}\abs{\nabla v}v^{\frac{n}{n-2}} d\mu + 
\int_{M}\abs{H}v^{\frac{2(n-1)}{n-2}} d\mu\right).
\end{equation*}
By Holder's inequality it follows that
\begin{eqnarray*}
 \left(\int_{M}v^{\frac{2n}{n-2}}d\mu\right)^{\frac{n-2}{n}}&\leq& c_{n}^{\frac{n-2}{n-1}}\left(\int_{M}\abs{\nabla v}v^{\frac{n}{n-2}} d\mu + 
\int_{M}\abs{H}v^{\frac{2(n-1)}{n-2}} d\mu\right)^{\frac{n-2}{n-1}}\\ &\leq &
c_{n}\left (\norm{\nabla v}_{L^{2}(M)}\norm{v}^{\frac{n}{n-2}}_{L^{2Q}(M)} + 
\norm{H}_{L^{n + 3}(M)}\norm{v}^{\frac{2(n-1)}{n-2}}_{L^{2m}(M)} \right)^{\frac{n-2}{n-1}}\\ &\leq &
c_{n} \left (\norm{\nabla v}^{\frac{n-2}{n-1}}_{L^{2}(M)}\norm{v}^{\frac{n}{n-1}}_{L^{2Q}(M)} + 
\norm{H}^{\frac{n-2}{n-1}}_{L^{n + 3}(M)}\norm{v}^{2}_{L^{2m}(M)} \right).
\end{eqnarray*} 
where
\begin{equation*}
 m =\frac{(n-1)(n + 3)}{(n-2)(n + 2)}.
\end{equation*}
Thus
\begin{equation}
  \norm{v}^2_{L^{2Q}(M)}\leq c_{n} \left (\norm{\nabla v}^{\frac{n-2}{n-1}}_{L^{2}(M)}\norm{v}^{\frac{n}{n-1}}_{L^{2Q}(M)} + 
\norm{H}^{\frac{n-2}{n-1}}_{L^{n + 3}(M)}\norm{v}^{2}_{L^{2m}(M)} \right).
\label{firstMM}
\end{equation}
By Young's inequality
\begin{equation}
 ab = (\e^{1/p}a)(\e^{-1/p}b) \leq \frac{\e a^{p}}{p} +\frac{\e^{-q/p}b^{q}}{q}\leq \e a^{p} + \e^{-q/p}b^{q},
\label{Young}
\end{equation}
where $a,b,\e > 0$, $p, q>1$ and $\frac{1}{p} +\frac{1}{q} =1$. If we apply it to (\ref{firstMM}), with 
\begin{equation*}
 a = \norm{v}^{\frac{n}{n-1}}_{L^{2Q}(M)}, \qquad b = \norm{\nabla v}^{\frac{n-2}{n-1}}_{L^{2}(M)},
\end{equation*}
and \begin{equation*}
    \e = \frac{1}{2c_n}, \,\,\, p =\frac{2(n-1)}{n}, \,\,\, q =\frac{2(n-1)}{n-2},
    \end{equation*}
we obtain
\begin{equation*}
  \norm{v}^2_{L^{2Q}(M)}\leq c_{n} \left ( \frac{1}{2c_{n}}\norm{v}^2_{L^{2Q}(M)} + 
(\frac{1}{2c_{n}})^{\frac{-n}{n-2}}\norm{\nabla v}^{2}_{L^{2}(M)} + 
\norm{H}^{\frac{n-2}{n-1}}_{L^{n + 3}(M)}\norm{v}^{2}_{L^{2m}(M)} \right).
\end{equation*}
Hence
\begin{equation}
  \norm{v}^2_{L^{2Q}(M)}\leq c_{n} \left (\norm{\nabla v}^{2}_{L^{2}(M)} + 
\norm{H}^{\frac{n-2}{n-1}}_{L^{n + 3}(M)}\norm{v}^{2}_{L^{2m}(M)} \right).
\label{beforeinter}
\end{equation}
Next, we will use the following interpolation inequality (see inequality (7.10) in \cite{GT})
\begin{equation}
 \norm{u}_{L^{r}}\leq \e  \norm{u}_{L^{s}} + \e^{-\mu}  \norm{u}_{L^{t}}
\label{interpol}
\end{equation}
where $t<r<s$ and 
\begin{equation*}
 \mu = (\frac{1}{t}-\frac{1}{r})/(\frac{1}{r}-\frac{1}{s}). 
\end{equation*}
Note that, in our case
$
 1<m<Q,$
and therefore, by (\ref{interpol})
\begin{equation}
 \norm{v}_{L^{2m}(M)}\leq \e  \norm{v}_{L^{2Q}(M)} + \e^{-\alpha}  \norm{v}_{L^{2}(M)}
\label{interep}
\end{equation}
where $\e>0$ and 
\begin{equation}
\alpha = \frac{Q(m-1)}{Q-m} =  \frac{n (2n +1)}{3(n-2)}.
\end{equation}
Plugging (\ref{interep}) into the right hand side of (\ref{beforeinter}), we deduce that
\begin{eqnarray}
 \norm{v}^2_{L^{2Q}(M)}&\leq& c_{n}\norm{\nabla v}^{2}_{L^{2}(M)} + c_{n} \norm{H}^{\frac{n-2}{n-1}}_{L^{n + 3}(M)}
\left ( \e  \norm{v}_{L^{2Q}(M)} + \e^{-\alpha}  \norm{v}_{L^{2}(M)} \right)^2\nonumber \\ &\leq &
c_{n}\norm{\nabla v}^{2}_{L^{2}(M)} + c_{n} \norm{H}^{\frac{n-2}{n-1}}_{L^{n + 3}(M)}
\left ( \e^2  \norm{v}^2_{L^{2Q}(M)} + \e^{-2\alpha}  \norm{v}^2_{L^{2}(M)} \right).
\label{absorb1}
\end{eqnarray}
Now, we can absorb the term involving $ \norm{v}^2_{L^{2Q}(M)}$ into the left hand side of (\ref{absorb1}) by choosing
$
 \e^{2} =\frac{1}{2c_{n}}\norm{H}^{-\frac{n-2}{n-1}}_{L^{n + 3}(M)}.$
Since $\frac{n-2}{n-1}(1 + \alpha) =  \frac{2(n + 2)}{3}$, we obtain the desired inequality
\begin{equation*}
  \norm{v}^2_{L^{2Q}(M)}\leq c_{n}\norm{\nabla v}^{2}_{L^{2}(M)} +
 c_{n}\norm{H}^{\frac{2(n+3)}{3}}_{L^{n + 3}(M)}\norm{v}^2_{L^{2}(M)}. 
\end{equation*}

\end{proof}
\h Our Sobolev type inequality for the mean curvature flow is stated in the following proposition.

\begin{propo}
For all nonnegative Lipschitz functions $v$, one has
\begin{multline}
 ||v||^{\beta}_{L^{\beta}(M\times[0,T))}\\ \leq c_{n}\max_{0\leq t\leq T}
\norm{v}^{4/n}_{L^{2}(M_{t})}\left (||\nabla v||^2_{L^2(M\times[0,T))} +  \max_{0\leq t\leq T}
\norm{v}^{2}_{L^{2}(M_{t})} ||H||^{\frac{2(n+3)}{3}}_{L^{n+3, \frac{2(n +3)}{3}}(M\times[0,T))}\right),
\end{multline}
where $\beta := \frac{2(n+2)}{n}$.
\label{MStime}
 \end{propo}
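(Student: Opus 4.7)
The plan is to derive the proposition by combining the spatial inequality of Lemma \ref{MSlem} with a suitable $L^2$--$L^{2Q}$ interpolation on each slice $M_t$, then integrating in time.

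First, on a fixed slice $M_t$, I would interpolate the $L^\beta$ norm between $L^2$ and $L^{2Q}$. Writing $\frac{1}{\beta}=\frac{\theta}{2}+\frac{1-\theta}{2Q}$ and solving with $\beta=\frac{2(n+2)}{n}$ and $Q=\frac{n}{n-2}$ yields $\theta=\frac{2}{n+2}$, so after raising to the power $\beta$ the convenient exponents
\begin{equation*}
\beta\theta=\frac{4}{n},\qquad \beta(1-\theta)=2
\end{equation*}
appear. Thus
\begin{equation*}
\|v\|_{L^{\beta}(M_t)}^{\beta}\le \|v\|_{L^{2}(M_t)}^{4/n}\,\|v\|_{L^{2Q}(M_t)}^{2}.
\end{equation*}
(When $n=2$, the same computation goes through choosing $Q$ large enough, since Lemma \ref{MSlem} is valid for any finite $Q$.)

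Next, I would plug in the Michael--Simon-type bound from Lemma \ref{MSlem} to get, for each $t\in[0,T)$,
\begin{equation*}
\|v\|_{L^{\beta}(M_t)}^{\beta}\le c_n\,\|v\|_{L^{2}(M_t)}^{4/n}\left(\|\nabla v\|_{L^{2}(M_t)}^{2}+\|H\|_{L^{n+3}(M_t)}^{\frac{2(n+3)}{3}}\|v\|_{L^{2}(M_t)}^{2}\right).
\end{equation*}
Integrating this pointwise-in-$t$ inequality over $[0,T)$, I pull the factor $\|v\|_{L^2(M_t)}^{4/n}$ out by replacing it with its time-maximum, and then apply the same $\max_t$-extraction to the factor $\|v\|_{L^2(M_t)}^2$ multiplying the mean curvature term. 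What remains of the $H$ contribution is exactly
\begin{equation*}
\int_0^T \|H\|_{L^{n+3}(M_t)}^{\frac{2(n+3)}{3}}\,dt = \|H\|_{L^{n+3,\,\frac{2(n+3)}{3}}(M\times[0,T))}^{\frac{2(n+3)}{3}}
\end{equation*}
by the definition of the mixed norm, and the gradient contribution is $\|\nabla v\|_{L^2(M\times[0,T))}^2$, yielding the stated inequality.

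None of the steps is truly delicate; the only point requiring care is the arithmetic of the interpolation exponents, since the entire proposition is designed so that the power $\frac{4}{n}$ of the $\max_t \|v\|_{L^2}$ and the power $\frac{2(n+3)}{3}$ of $H$ in the mixed $L^{p,q}$ norm come out exactly right. The latter is dictated by Lemma \ref{MSlem}, while the former is dictated by the requirement that $\beta(1-\theta)=2$ match the $L^{2Q}$ exponent supplied by Lemma \ref{MSlem}. Once those two exponents are aligned, the rest is a straightforward time integration plus Hölder/extraction of suprema.
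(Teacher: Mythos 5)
Your proposal is correct and follows essentially the same route as the paper: the paper also first reduces $\|v\|_{L^\beta(M_t)}^\beta$ to $\|v\|_{L^2(M_t)}^{4/n}\|v\|_{L^{2Q}(M_t)}^2$ on each time slice (the paper does it by a direct Hölder on $v^2\cdot v^{4/n}$, which is exactly the interpolation you invoke), then applies Lemma \ref{MSlem} and integrates in time, pulling out the suprema of $\|v\|_{L^2(M_t)}$.
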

\begin{proof}
 By Holder's inequality, we have
\begin{eqnarray*}
 \int_{0}^{T}\int_{M_{t}}v^{\frac{2(n+2)}{n}}d\mu dt = \int_{0}^{T}dt \int_{M_{t}} v^2 v^{4/n}d\mu&\leq&
\int_{0}^{T}dt \left(\int_{M_{t}}v^{\frac{2n}{n-2}}d\mu\right)^{\frac{n-2}{n}}\left(\int_{M_{t}}v^{2}d\mu\right)^{\frac{2}{n}}\\ &\leq &
\max_{0\leq t\leq T}
\norm{v}^{4/n}_{L^{2}(M_{t})} \int_{0}^{T}\norm{v(\cdot, t)}^{2}_{L^{2Q}(M_{t})}.
\end{eqnarray*}
Now, applying Lemma \ref{MSlem}, we get
\begin{multline*}
||v||^{\beta}_{L^{\beta}(M\times[0,T))}\\ \leq c_{n}\max_{0\leq t\leq T}
\norm{v}^{4/n}_{L^{2}(M_{t})}\left (\int_0^T\int_{M_t}|\nabla v|^2\, d\mu\, dt + 
\int_0^T(\int_{M_t}|H|^{n+3}\, d\mu)^{\frac{2}{3}}||v(\cdot,t)||^2_{L^2(M_t)}\,  dt\right)\\
\leq c_{n}\max_{0\leq t\leq T}
\norm{v}^{4/n}_{L^{2}(M_{t})}\left (||\nabla v||^2_{L^2(M\times[0,T))} +  \max_{0\leq t\leq T}
\norm{v}^{2}_{L^{2}(M_{t})} ||H||^{\frac{2(n+3)}{3}}_{L^{n+3, \frac{2(n +3)}{3}}(M\times[0,T))}\right).
\end{multline*}
\end{proof}

\section{Reverse Holder and Harnack Inequalities}
\label{sec-Harnack}
In this section, we state a soft version of reverse Holder
inequality (Lemma \ref{softRH}) and a Harnack inequality (Lemma \ref{Harnackineq}) for parabolic inequality during the mean curvature flow.\\
We start with the differential inequality
\begin{equation}
 (\frac{\partial}{\partial t}-\Delta ) v\leq fv, ~v\geq 0
\label{keyeq}
\end{equation}
where the function $f$ has bounded $L^{q}(M\times [0, T))$-norm with $q > \frac{n+2}{2}$.
Let $\eta(t,x)$ be a smooth function with the property that $\eta(0, x) = 0$ for all $x$. 
\begin{lemma}
Let 
\begin{equation}
 C_{0} \equiv C_{0}(q)= ||f||_{L^q(M\times[0,T))},\qquad  C_{1} = (1 +\norm{H}^{\frac{2(n+3)}{3}}_{L^{n + 3, \frac{2(n +3)}{3}}(M\times [0, T))})^{\frac{n}{n + 2}},
\label{Czero}
\end{equation}
$\beta > 1$ be a fixed number and $q>\frac{n +2}{2}$. 
Then there exists a positive constant $C_{a} = C_{a}(n, q, C_{0}, C_{1})$ such that
 \begin{equation}
 ||\eta^2 v^{\beta}||_{L^{(n+2)/n}(M\times[0,T))} 
\leq C_{a}\Lambda(\beta)^{1 + \nu}||v^{\beta}\left(\eta^2 + \abs{\nabla\eta}^2 + 
2\eta \abs{(\frac{\partial}{\partial t}-\Delta)\eta}\right)||_{L^1(M\times[0,T))},
\label{RH}
\end{equation}
where 
\begin{equation}
 \nu =\frac{n + 2}{ 2q -(n +2)},
\label{nueq}
\end{equation}
and $\Lambda(\beta)$ is a positive constant depending on $\beta$ such that $\Lambda(\beta)\geq 1$ if 
$\beta\geq 2$ (e.g. we can choose $\Lambda(\beta) = 100\beta$).

In fact,  we can choose
\begin{equation}
 C_{a}(n, q, C_{0}, C_{1}) = (2c_{n}C_{0}C_{1})^{1+\nu}.
\label{Ca}
\end{equation}
\label{softRH}
\end{lemma}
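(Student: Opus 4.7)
The proof follows the standard parabolic Moser-iteration paradigm: produce a Caccioppoli-type energy estimate by testing the differential inequality against $\eta^{2} v^{\beta-1}$, combine it with the Michael--Simon--Sobolev inequality of Proposition \ref{MStime} to lift the estimate to $L^{(n+2)/n}$, and then close the resulting self-improving inequality using H\"older's and Young's inequalities in a way that exploits $q>(n+2)/2$.

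For the energy estimate, I would start from the pointwise identity
\begin{equation*}
(\partial_t - \Delta)(\eta^2 v^\beta) = \beta \eta^2 v^{\beta-1}(\partial_t-\Delta)v + [2\eta(\partial_t-\Delta)\eta - 2|\nabla\eta|^2]v^\beta - 4\beta \eta v^{\beta-1}\nabla\eta\cdot\nabla v - \beta(\beta-1)\eta^2 v^{\beta-2}|\nabla v|^2,
\end{equation*}
integrate over $M_t\times[0,s]$, and use the moving-hypersurface formula $\frac{d}{dt}\int_{M_t} w\, d\mu = \int_{M_t}(\partial_t w - H^2 w)\, d\mu$ together with $\int_{M_t}\Delta w\, d\mu = 0$ to rewrite $\int_0^s\int_{M_t}(\partial_t-\Delta)(\eta^2 v^\beta)\, d\mu\, dt$ as $\int_{M_s}\eta^2 v^\beta\, d\mu + \int_0^s\int_{M_t} H^2\eta^2 v^\beta\, d\mu\, dt$; the $t=0$ boundary term vanishes since $\eta(0,\cdot)=0$. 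Applying $(\partial_t-\Delta)v \le fv$, absorbing the cross term $4\beta\eta v^{\beta-1}\nabla\eta\cdot\nabla v$ into $\beta(\beta-1)\eta^2 v^{\beta-2}|\nabla v|^2$ via Cauchy--Schwarz, discarding the positive $H^2$ contribution, and using $|\nabla v^{\beta/2}|^2=(\beta^2/4)v^{\beta-2}|\nabla v|^2$, yields a Caccioppoli bound
\begin{equation*}
\max_{0\le s<T}\int_{M_s}\eta^2 v^\beta\, d\mu + c_\beta\int_0^T\int_{M_t}\eta^2|\nabla v^{\beta/2}|^2\, d\mu\, dt \le \beta\int_0^T\int_{M_t} f\eta^2 v^\beta\, d\mu\, dt + C\Lambda(\beta)\,\mathcal{E},
\end{equation*}
where $\mathcal{E}:=\|v^\beta(\eta^2+|\nabla\eta|^2+2\eta|(\partial_t-\Delta)\eta|)\|_{L^1(M\times[0,T))}$. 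Setting $u:=\eta v^{\beta/2}$ and using $|\nabla u|^2\le 2\eta^2|\nabla v^{\beta/2}|^2+2v^\beta|\nabla\eta|^2$, the Caccioppoli estimate controls both $\max_t\|u\|_{L^2(M_t)}^2$ and $\|\nabla u\|_{L^2(M\times[0,T))}^2$ by $\mathcal{R}:=\beta\int_0^T\int_{M_t} f\eta^2 v^\beta\, d\mu\, dt+C\Lambda(\beta)\mathcal{E}$, and plugging into Proposition \ref{MStime} followed by taking the $n/(n+2)$-th root gives
\begin{equation*}
\mathcal{L} := \|\eta^2 v^\beta\|_{L^{(n+2)/n}(M\times[0,T))} = \|u\|_{L^{2(n+2)/n}(M\times[0,T))}^2 \le c_n\Lambda(\beta)^{n/(n+2)} C_1 \mathcal{R}.
\end{equation*}

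The final step is H\"older followed by Young. Since $q>(n+2)/2$, the conjugate satisfies $q/(q-1)\in(1,(n+2)/n)$, so H\"older's inequality and the log-convexity of $L^s$-norms give
\begin{equation*}
\int_0^T\int_{M_t} f\eta^2 v^\beta\, d\mu\, dt \le C_0\,\|\eta^2 v^\beta\|_{L^{q/(q-1)}} \le C_0\,\mathcal{L}^\lambda\,\|\eta^2 v^\beta\|_{L^1}^{1-\lambda},\qquad \lambda=\frac{n+2}{2q}=\frac{\nu}{1+\nu}.
\end{equation*}
Since $\|\eta^2 v^\beta\|_{L^1}\le\mathcal{E}$, the combined estimate becomes $\mathcal{L}\le A\,\mathcal{L}^\lambda\mathcal{E}^{1-\lambda}+B\,\mathcal{E}$ with $A\lesssim\beta\Lambda(\beta)^{n/(n+2)}C_0 C_1$ and $B\lesssim\Lambda(\beta)^{1+n/(n+2)}C_1$. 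Young's inequality $xy\le\tfrac12 x^{1/\lambda}+C(\lambda)y^{1/(1-\lambda)}$ applied with $x=\mathcal{L}^\lambda$, $y=A\mathcal{E}^{1-\lambda}$ absorbs $\tfrac12\mathcal{L}$ to the LHS. Noting $1/(1-\lambda)=1+\nu$ and choosing $\Lambda(\beta)$ as a sufficiently large polynomial in $\beta$ (e.g.\ $100\beta$) to dominate all the $\beta$-powers produced, one reaches $\mathcal{L}\le(2c_n C_0 C_1)^{1+\nu}\Lambda(\beta)^{1+\nu}\mathcal{E}$, which is the asserted inequality with $C_a=(2c_n C_0 C_1)^{1+\nu}$. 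The main technical nuisance is precisely this bookkeeping, namely consolidating the various $\beta$-dependent constants (from Cauchy--Schwarz in the Caccioppoli step, the root extraction in the Sobolev step, and Young's inequality in the closing step) into a single clean factor $\Lambda(\beta)^{1+\nu}$; the $H^2\eta^2 v^\beta$ term produced by the moving-volume identity is harmless, carrying the favorable sign on the LHS of the energy estimate.
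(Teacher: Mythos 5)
Your argument follows the same Moser-iteration route the paper implicitly takes---the paper defers the details to [LS, Lemma~4.1] and notes it uses the Sobolev inequality of Proposition~\ref{MStime}. The skeleton you present (Caccioppoli estimate from expanding $(\partial_t-\Delta)(\eta^2 v^\beta)$ and integrating with the moving-volume identity $\partial_t d\mu=-H^2d\mu$, which gives the harmless $H^2$ term on the favorable side; the Sobolev inequality from Proposition~\ref{MStime} applied to $u=\eta v^{\beta/2}$; then H\"older against $\|f\|_{L^q}$ followed by Young, exploiting $q>(n+2)/2$ so that the interpolation exponent $\lambda=(n+2)/(2q)\in(0,1)$ and $1/(1-\lambda)=1+\nu$) is exactly the intended argument, and the verification that $\lambda=\nu/(1+\nu)$ is correct.

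One bookkeeping slip is worth flagging: the extra factor $\Lambda(\beta)^{n/(n+2)}$ you attach to the Sobolev step is spurious---Proposition~\ref{MStime} carries no $\beta$-dependence, so the bound there should simply read $\mathcal{L}\le c_n C_1\,\mathcal{R}$. Retaining the fictitious $\Lambda(\beta)^{n/(n+2)}$ actually wrecks the $\beta$-degree count: it would force $A^{1+\nu}$ to scale like $\beta^{(1+\nu)(1+n/(n+2))}$, which $\Lambda(\beta)^{1+\nu}$ cannot dominate for any polynomial $\Lambda$. Dropping it, $A\lesssim\beta C_0C_1$, so $A^{1+\nu}\lesssim \beta^{1+\nu}(C_0C_1)^{1+\nu}$, which matches $\Lambda(\beta)^{1+\nu}(C_0C_1)^{1+\nu}$ with $\Lambda(\beta)=100\beta$ and yields the asserted $C_a=(2c_nC_0C_1)^{1+\nu}$; the remaining term $c_nC_1\Lambda(\beta)\mathcal{E}$ is then dominated by the same expression using $\Lambda(\beta)\le\Lambda(\beta)^{1+\nu}$ for $\Lambda\geq 1$. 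So the approach is right but the precise $\beta$-tracking needs the correction above.
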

This lemma can be proved similarly as in the proof of Lemma 4.1 in \cite{LS}, using the Sobolev type inequality for the mean curvature
flow established in Proposition \ref{MStime}. \\
\h Next, we show that an $L^{\infty}$-norm of $v$ over a smaller set can be bounded by an $L^{\beta}$-norm of $v$ on a bigger set, 
where $\beta \ge 2$. Fix $x_{0}\in \RR^{n +1}$. Consider the following sets in space and time,
\begin{equation*}
 D = \cup_{0\le t\le 1} (B(x_{0}, 1)\cap M_t);\,\,\, D^{'} = \cup_{\frac{1}{12}\le t\leq 1}(B(x_{0}, \frac{1}{2})\cap M_t).
\end{equation*}
\h Then, we have the following Harnack inequality.
\begin{lemma}
Consider the equation (\ref{keyeq}) with $T= 1$. Let us denote by
$ \lambda = \frac{n +2}{n}$, let $q>\frac{n +2}{2}$ and $\beta\geq 2$. Then, there exists a constant $C_{b}= C_{b}(n, q, \beta, C_{0}, C_{1})$ such that
\begin{equation}
 \norm{v}_{L^{\infty}(D^{'})}\leq C_{b}(n, q, \beta, C_{0}, C_{1}) \norm{v}_{L^{\beta}(D)}.
\label{firstM}
\end{equation}
In the above inequalities, $C_{0}$ and $C_{1}$ are defined by (\ref{Czero}).\\
In fact, we can choose
\begin{equation}
\label{eq-Cb}
C_{b}(n, q, \beta, C_{0}, C_{1}) = (4\lambda^{1+ \nu} C_z\beta^{1+\nu})^{\frac{n^2}{\beta}},
\end{equation}
where 
\begin{equation}
C_{z}(n, q, C_{0}, C_{1}) := 4^2\times 100^{1+\nu}c_{n}C_{a}(n, q, C_{0}, C_{1}).
\label{Cz}
\end{equation}
\label{Harnackineq}
\end{lemma}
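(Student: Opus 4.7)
The plan is a parabolic Moser iteration driven by the reverse-H\"older estimate (\ref{RH}) just established. Set $\lambda=(n+2)/n$ and $\beta_k=\beta\lambda^k$ for $k\ge 0$, and choose a nested exhausting family of parabolic cylinders with $D_0=D$ and $D_k\searrow D'$. For concreteness take $r_k=\tfrac12+2^{-k-1}$, $\sigma_k=\tfrac1{12}(1-4^{-k})$, and $D_k=\bigcup_{\sigma_k\le t\le 1}(B(x_0,r_k)\cap M_t)$, so both the radii and the (parabolically scaled) starting times interpolate geometrically between $D$ and $D'$.

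\textbf{Key steps.} For each $k$, construct a space-time cutoff $\eta_k(t,x)=\phi_k(x)\psi_k(t)$ with $\phi_k$ supported in $B(x_0,r_k)$ and identically one on $B(x_0,r_{k+1})$, and $\psi_k$ vanishing for $t\le\sigma_k$ and equal to one for $t\ge\sigma_{k+1}$, arranged so that $|\nabla\eta_k|\le C\,2^k$ and $|(\partial_t-\Delta)\eta_k|\le C\,4^k$ on each $M_t$. For a product cutoff of this form, the contributions to $\partial_t\eta_k$ coming from $\partial_t F=-H\nu$ exactly cancel the normal-direction term $H(\nabla\phi_k\cdot\nu)$ inside $\Delta^{M_t}\phi_k$, so the parabolic-operator bound reduces to an ambient Hessian estimate on $\phi_k$ and a time-derivative estimate on $\psi_k$, both scaling like $4^k$. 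Crucially $\eta_k(0,\cdot)\equiv 0$, as required by Lemma \ref{softRH}. Applying (\ref{RH}) with $\beta=\beta_k$ and $\eta=\eta_k$, the left side dominates $\|v\|_{L^{\beta_{k+1}}(D_{k+1})}^{\beta_k}$ (since $\eta_k\equiv 1$ on $D_{k+1}$ and $\beta_k\lambda=\beta_{k+1}$) while the right side is bounded by $C_a\Lambda(\beta_k)^{1+\nu}\cdot C\,4^k\|v\|_{L^{\beta_k}(D_k)}^{\beta_k}$. Choosing $\Lambda(\beta_k)=100\beta_k$ and raising to the power $1/\beta_k$ yields the recursion
\[
\|v\|_{L^{\beta_{k+1}}(D_{k+1})}\le\bigl(C_z(\beta\lambda^k)^{1+\nu}\cdot 4^k\bigr)^{1/\beta_k}\|v\|_{L^{\beta_k}(D_k)},
\]
with $C_z$ as in (\ref{Cz}). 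Iterating from $k=0$ and sending $k\to\infty$, the $L^{\beta_k}(D_k)$ norms converge to $\|v\|_{L^\infty(D')}$ because $D_k\searrow D'$ and $\beta_k\to\infty$, which gives (\ref{firstM}).

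\textbf{Bookkeeping.} Taking logarithms, the overall constant is determined by $\sum_{k\ge 0}\beta_k^{-1}\log\bigl(C_z(\beta\lambda^k)^{1+\nu}\cdot 4^k\bigr)$. This is finite because $\sum_{k}\lambda^{-k}=\lambda/(\lambda-1)=\tfrac{n+2}{2}$ and $\sum_k k\lambda^{-k}=\lambda/(\lambda-1)^2=\tfrac{n(n+2)}{4}$ are both convergent, and a mild overestimation repackages the result in the closed form $(n^2/\beta)\log\bigl(4\lambda^{1+\nu}C_z\beta^{1+\nu}\bigr)$, reproducing (\ref{eq-Cb}).

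\textbf{Main obstacle.} Conceptually the argument is routine once (\ref{RH}) is available; the two delicate points are (i) verifying that the cutoff $\eta_k$ admits the parabolic bound $|(\partial_t-\Delta)\eta_k|\le C\,4^k$ on the flowing $M_t$ without any dependence on $H$ or $|A|$ (handled by the cancellation for product cutoffs noted above), and (ii) tracking the accumulation of $k$-linear factors like $\log\lambda^k$ and $\log 4^k$ through the iteration to obtain the explicit constant (\ref{eq-Cb}) rather than a merely finite bound.
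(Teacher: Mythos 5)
Your proposal is correct and follows essentially the same route the paper intends: a Moser iteration driven by the reverse H\"older inequality of Lemma \ref{softRH}, with nested parabolic cylinders shrinking from $D$ to $D'$ and space-time product cutoffs. The paper itself gives no details beyond pointing to Lemma 5.2 of \cite{LS}, and your reconstruction supplies exactly the right ingredients. In particular you correctly flag the two non-automatic points: the cancellation, for a cutoff $\eta_k(t,x)=\phi_k(x)\psi_k(t)$ with $\phi_k$ an ambient function, of the $H\langle\nabla\phi_k,\nu\rangle$ terms between $\partial_t\eta_k$ (coming from $\partial_t F=-H\nu$) and $\Delta^{M_t}\phi_k$, which is what makes the bound $|(\partial_t-\Delta)\eta_k|\le C\,4^k$ hold without any control on $H$ or $A$; and the summation of the geometric series $\sum\lambda^{-k}=\tfrac{n+2}{2}$ and $\sum k\lambda^{-k}=\tfrac{n(n+2)}{4}$, both of which are dominated by $n^2$ for $n\ge 2$, which is why the product of per-step constants can be overestimated by the clean closed form $\bigl(4\lambda^{1+\nu}C_z\beta^{1+\nu}\bigr)^{n^2/\beta}$ in (\ref{eq-Cb}). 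One small wording issue: $\|v\|_{L^{\beta_k}(D_k)}$ need not literally converge to $\|v\|_{L^\infty(D')}$; the inequality one actually uses is $\|v\|_{L^{\beta_k}(D_k)}\ge\|v\|_{L^{\beta_k}(D')}\to\|v\|_{L^\infty(D')}$ (since $D'\subset D_k$ and $\mu(D')<\infty$), which combined with the uniform upper bound from the iteration gives (\ref{firstM}). This does not affect the validity of the argument.
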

The proof of this lemma, using Lemma \ref{softRH} and Moser iteration, is similar to that of Lemma 5. 2 in \cite{LS}.

\section{Bounding the second fundamental form}
\label{boundSF}
In this section, we prove Proposition \ref{keybound}. First, we establish the following rescaled version of Proposition \ref{keybound}.
\begin{propo}
There is a universal constant $c_{0}$ depending only on $n$ such that
if 
\begin{equation}
 \int_{0}^{1}\int_{M_{t}} \abs{A}^{n +3} d\mu dt \leq c_{0}
\end{equation}
 then
\begin{equation}
 \sup_{\frac{1}{2}\leq t \leq 1} \sup_{x\in M_{t}}\abs{A(x,t)} \leq 1.
\end{equation}
\label{smallnorm}
\end{propo}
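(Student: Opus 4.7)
The plan is to apply the Harnack estimate of Lemma \ref{Harnackineq} to $v := |A|^2$, exploiting Huisken's standard evolution identity
\begin{equation*}
\left(\frac{\partial}{\partial t}-\Delta\right)|A|^2 \;=\; -2|\nabla A|^2 + 2|A|^4 \;\leq\; (2|A|^2)\cdot |A|^2,
\end{equation*}
which puts $v$ into the framework of the differential inequality (\ref{keyeq}) with $f := 2|A|^2$. The goal is then to translate smallness of $\int_0^1\int_{M_t}|A|^{n+3}\,d\mu\,dt$ into smallness of the $L^\infty$ norm of $v$ on the later time slices, via Lemma \ref{Harnackineq}.

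For this I would choose the exponent $q = (n+3)/2$, which satisfies $q > (n+2)/2$, so that
\begin{equation*}
C_0 \;=\; \|f\|_{L^q(M\times[0,1))} \;=\; 2\,\|A\|^2_{L^{n+3}(M\times[0,1))} \;\leq\; 2\, c_0^{2/(n+3)}.
\end{equation*}
For $C_1$, I would use the pointwise bound $|H|\leq \sqrt{n}\,|A|$ together with Hölder's inequality in time (recalling that $2/3<1$):
\begin{equation*}
\|H\|^{2(n+3)/3}_{L^{n+3,\,2(n+3)/3}(M\times[0,1))}
=\int_0^1\!\Big(\int_{M_t}|H|^{n+3}\Big)^{2/3}\!dt
\;\leq\; n^{(n+3)/3}\, c_0^{2/3}.
\end{equation*}
Thus, assuming $c_0\leq 1$, both $C_0$ and $C_1$ are controlled by constants depending only on $n$. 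This is precisely where the power $2/3$ emphasized in the remark after Lemma \ref{MSlem} is essential: it converts the hypothesis into a \emph{positive} power of $c_0$, preventing any circular dependence on $\sup|A|$.

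Next, I would pick $\beta = (n+3)/2 \geq 2$ and apply Lemma \ref{Harnackineq}. For any $t_0\in[1/2,1]\subset[1/12,1]$ and any $x_0\in M_{t_0}$, centering the extrinsic ball at $x_0$ puts the point $(x_0,t_0)$ into the set $D'$, so that
\begin{equation*}
|A(x_0,t_0)|^2 \;\leq\; \|v\|_{L^\infty(D')} \;\leq\; C_b\,\|v\|_{L^\beta(D)} \;\leq\; C_b\Big(\int_0^1\!\int_{M_t}|A|^{n+3}d\mu\,dt\Big)^{1/\beta} \;\leq\; C_b\, c_0^{2/(n+3)}.
\end{equation*}
Since $C_b = C_b(n,q,\beta,C_0,C_1)$ now depends only on $n$, taking the supremum over $x_0\in M_{t_0}$ and $t_0\in[1/2,1]$ yields $\sup|A|^2 \leq C(n)\,c_0^{2/(n+3)}$, and the proof is completed by selecting $c_0 = c_0(n)$ so small that $C(n)\,c_0^{2/(n+3)}\leq 1$.

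The main obstacle is the bookkeeping needed to verify that $C_0$, $C_1$, and hence $C_a$, $C_z$, $C_b$ all collapse to constants depending only on $n$ once $c_0\leq 1$; one must in particular track that the exponent on $c_0$ coming from the $\|H\|^{2(n+3)/3}$ term remains strictly positive, which is exactly the role of the $2/3$ in Lemma \ref{MSlem}.
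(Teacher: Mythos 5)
Your proof is correct and follows essentially the same route as the paper: set $v=|A|^2$, use Huisken's evolution identity to get $(\partial_t-\Delta)v\le fv$ with $f=2|A|^2$, apply Lemma \ref{Harnackineq} with $q=(n+3)/2$, and then track that $C_0$, $C_1$, and hence $C_b$ collapse to constants depending only on $n$ once $c_0\le 1$, so shrinking $c_0$ yields $\sup|A|\le 1$ on $[1/2,1]$.

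The only substantive divergence is the choice of $\beta$. The paper states $\beta=n+3$, under which $\|v\|_{L^{\beta}(D)}=\bigl(\int_D|A|^{2(n+3)}\bigr)^{1/(n+3)}$ is not directly controlled by the hypothesis $\int_0^1\int_{M_t}|A|^{n+3}\le c_0$; the paper nevertheless writes the final estimate as if $\|v\|_{L^\beta(D)}$ were a positive power of $c_0$. Your choice $\beta=(n+3)/2\ge 2$ is the one that actually makes $\|v\|_{L^{\beta}(D)}=\bigl(\int_D|A|^{n+3}\bigr)^{2/(n+3)}\le c_0^{2/(n+3)}$, so your bookkeeping is tighter and arguably repairs a small slip in the paper. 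A second, cosmetic difference: you bound $C_1$ directly via $|H|\le\sqrt{n}\,|A|$ and Jensen/H\"older in time, while the paper bounds $C_1$ through $C_0$ via H\"older (inequality (\ref{CCbound})); both yield an $n$-dependent bound on $C_1$ once $c_0\le 1$. Everything else (the covering of $M_t$, $t\in[1/2,1]$, by translated sets $D'$, and the final choice of $c_0$ small) matches the intended argument.
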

\begin{proof}[Proof of Proposition \ref{smallnorm}]
Using the evolution 
\begin{equation*}
 (\frac{\partial}{\partial t} -\Delta) \abs{A} ^2 = - 2 \abs{\nabla A}^2 + 2\abs{A}^{4}
\end{equation*}
derived in \cite{Huisken84}, we obtain for $v = \abs{A}^2$
\begin{equation*}
 (\frac{\partial}{\partial t}-\Delta ) v\leq fv
\end{equation*}
where $f =2v.$
Our proposition is now an easy consequence of Lemma \ref{Harnackineq} where $q = \frac{n +3}{2}$ and $\beta = n+ 3$. In fact, 
from (\ref{nueq}) and (\ref{Ca}), one has
$C_{a} = c_{n} (C_{0}C_{1})^{n +3}$. From (\ref{Cz}) and (\ref{eq-Cb}), one has
$C_{b} = c_{n} C_{z}^{\frac{n^2}{n +3}} = c_{n} (C_{0}C_{1})^{n ^2}.$ By Holder's inequality
\begin{equation}
 C_{1} \leq (1 + C_{0}^{n +3})^{\frac{n}{n +2}}.
\label{CCbound}
\end{equation}
Now, by (\ref{firstM}), one has
\begin{equation*}
\norm{v}_{L^{\infty}(D^{'})}\leq C_{b}\norm{v}_{L^{\beta}(D)} \leq c_{n} (C_{0} (1 + C_{0}^{n +3})^{\frac{n}{n +2}})^{n^2} \norm{v}_{L^{\beta}(D)}
\leq c_{n} (c_{0} (1 + c_{0}^{n +3})^{\frac{n}{n +2}}) c_{0}^{\frac{1}{ n +3}} \leq 1
\end{equation*}
 if $c_{0}$ is small, universal.
\end{proof}

\begin{proof}[Proof of Proposition \ref{keybound}]
We first consider the special case $\lambda =1$ and $T\geq 1.$ There are two cases.\\
{\bf Case 1.} This is the case when 
\begin{equation*}
 \int_{0}^{T}\int_{M_{t}} \abs{A}^{n +3} d\mu dt \leq c_{0}.
\end{equation*}
In this case, we consider a new one-parameter family of immersions $\tilde{F}$ defined by $\tilde{F}(x,t) = F(x, T-1 + t)$. Then
\begin{equation*}
 \int_{0}^{1}\int_{\tilde{M_{t}}} \abs{\tilde{A}}^{n +3} d\mu dt = \int_{T-1}^{T}\int_{M_{t}} \abs{A}^{n +3} d\mu dt \leq \int_{0}^{T}\int_{M_{t}} \abs{A}^{n +3} d\mu dt
\leq c_{0}.
\end{equation*}
By Proposition \ref{smallnorm}, one has
\begin{equation*}
 \sup_{x\in \tilde{M_{1}}} \abs{\tilde{A}(x,1)} \leq 1. 
\end{equation*}
Hence 
\begin{equation}
 \sup_{x\in M_{T}} \abs{A(x, T)} \leq 1.
\end{equation}
{\bf Case 2.} This is the case when 
\begin{equation*}
 \int_{0}^{T}\int_{M_{t}} \abs{A}^{n +3} d\mu dt \geq c_{0}.
\end{equation*}
In this case, we consider a new one-parameter family of immersions $\tilde{F}$ defined by 
$
 \tilde{F}(x,t) = Q F(x, \frac{t}{Q^2}). $
We find that
\begin{equation*}
 \int_{0}^{Q^2T}\int_{\tilde{M_{t}}} \abs{\tilde{A}}^{n +3} d\mu dt =\frac{1}{Q}\int_{0}^{T}\int_{M_{t}} \abs{A}^{n +3} d\mu dt = c_{0}
\end{equation*}
if we choose
\begin{equation*}
 Q = \frac{1}{c_{0}} \int_{0}^{T}\int_{M_{t}} \abs{A}^{n +3} d\mu dt \geq 1.
\end{equation*}
Now, we are back in {\bf Case 1} and thus can conclude
\begin{equation*}
 \sup_{x\in \tilde{M}_{Q^2 T}} \abs{\tilde{A}}(x, Q^2T) \leq 1.
\end{equation*}
This gives
\begin{equation*}
 \sup_{x\in M_{T}} \abs{A(x, T)} = Q \sup_{x\in \tilde{M}_{Q^2 T}} \abs{\tilde{A}}(x, Q^2T)
\leq Q =  \frac{1}{c_{0}} \int_{0}^{T}\int_{M_{t}} \abs{A}^{n +3} d\mu dt. 
\end{equation*}
Combining the above two cases, we find that for $T\geq 1$, one has
\begin{equation}
 \sup_{x\in M_{T}} \abs{A(x, T)} \leq Q = (1 + \frac{1}{c_{0}})(1 + \int_{0}^{T}\int_{M_{t}} \abs{A}^{n +3} d\mu dt).
\label{special}
\end{equation}
Finally, we consider the general case $\lambda \in (0, 1]$ and $T\geq \lambda$. As usual, let us consider 
a new one-parameter family of immersions $\tilde{F}$ defined by 
$
 \tilde{F}(x,t) = Q F(x, \frac{t}{Q^2}) $
where $Q = \frac{1}{T^{\frac{1}{2}}}\leq \frac{1}{\lambda^{\frac{1}{2}}}.$
Then $Q^2 T\geq 1.$
Thus, from the estimate (\ref{special}) in the special case, one has
\begin{eqnarray*}
  \sup_{x\in \tilde{M}_{Q^2T}} \abs{\tilde{A}}(x, Q^2T) &\leq& (1 + \frac{1}{c_{0}})(1 + \int_{0}^{Q^2T}\int_{M_{t}} \abs{\tilde{A}}^{n +3} d\mu dt)
\\ &=& (1 + \frac{1}{c_{0}})(1 + \frac{1}{Q}\int_{0}^{T}\int_{M_{t}} \abs{A}^{n +3} d\mu dt).
\end{eqnarray*}
Consequently,
\begin{eqnarray*}
 \sup_{x\in M_{T}}\abs{A}(x, T) &=& Q\sup_{x\in \tilde{M}_{Q^2T}} \abs{\tilde{A}}(x, Q^2T) \leq 
Q(1 + \frac{1}{c_{0}})(1 + \frac{1}{Q}\int_{0}^{T}\int_{M_{t}} \abs{A}^{n +3} d\mu dt)\\ &\leq &
\frac{1}{\lambda^{\frac{1}{2}}}(1 + \frac{1}{c_{0}})(1 + \int_{0}^{T}\int_{M_{t}} \abs{A}^{n +3} d\mu dt).
\end{eqnarray*}

\end{proof}
\begin{remark}
 We can choose the constant $c_{\lambda}$ in Proposition \ref{keybound} as follows:
$
 c_{\lambda} = \frac{1}{\lambda^{\frac{1}{2}}} (1 + \frac{1}{c_{0}}).$

\end{remark}
\section{Proof of the main theorem}
\label{sec-proof}
\begin{proof}[Proof of Theorem \ref{Abound}] Fix $\tau_{1}<T$ such that $0< \tau_{1}<1$. Then, by Proposition \ref{keybound},
 for any $t\geq\tau_{1}$, there 
is a universal constant $c$ depending only on $\tau_{1}$, such that
\begin{equation}
  \sup_{x\in M_{t}} \abs{A(x,t)} \leq c (1 + \int_{0}^{t}\int_{M_{s}} \abs{A}^{n +3} d\mu ds).
\label{tauineq}
\end{equation}
Let $f(t) =   \sup_{x\in M_{t}} \abs{A(x,t)}$, $\Psi (s) = s log (2 +s)$ and 
\begin{equation*}
 G(s) = \int_{M_{s}} \frac{\abs{A}^{n + 2}}{log (2 + \abs{A})} d\mu.
\end{equation*}
 Then $\Psi$ is an increasing function. Note that (\ref{tauineq}) gives
\begin{eqnarray*}
 f(t)&\leq& c (1 + \int_{0}^{t} \int_{M_{s}}\Psi(\abs{A}) \frac{\abs{A}^{n + 2}}{log (2 + \abs{A})} d\mu ds) \\
&\leq& c (1 + \int_{0}^{t} \Psi (\sup_{x\in M_{s}} \abs{A(x,s)}) \int_{M_{s}} \frac{\abs{A}^{n + 2}}{log (2 + \abs{A})} d\mu ds)= 
c (1 + \int_{0}^{t} \Psi (f(s)) G(s) ds).
\end{eqnarray*}
Let 
\begin{equation*}
 h(t) = c (1 + \int_{0}^{t} \Psi (f(s)) G(s) ds).
\end{equation*}
Then for $t\geq \tau_{1}$
\begin{equation*}
 f(t)\leq h(t)
\end{equation*}
and
\begin{equation*}
 h^{'}(t) = c \Psi (f(t)) G(t) \leq c \Psi (h(t)) G(t).
\end{equation*}
Let $\tilde{\Psi} (y) = \int_{c}^{y} \frac{1}{\Psi (s)} ds$. Then for $t\geq \tau_{1}$
\begin{equation*}
 \tilde{\Psi} (h(t)) - \tilde{\Psi} (h(\tau_{1})) \leq c \int_{\tau_{1}}^{t} G(s) ds\leq c\int_{0}^{T} G(s) ds <\infty.
\end{equation*}
Hence, since $h(\tau_{1})$ is finite 
\begin{equation*} 
 \sup_{\tau_{1}\leq t< T} \tilde{\Psi}(h(t))\leq \tilde{\Psi} (h(\tau_{1})) + c \int_{0}^{T} G(s) ds<\infty.
\end{equation*}
Since$
 \int_{c}^{\infty} \frac{1}{\Psi (s)} ds = \infty, $
we deduce that $
 \sup_{\tau_{1}\leq t<T} h(t)<\infty. $
Hence
$
  \sup_{\tau_{1}\leq t<T} f(t)<\infty. $
Therefore, the flow can be extended past T.
\end{proof}

{} 

\end{document}